\documentclass[11pt]{article}

\usepackage[a4paper,margin=1in]{geometry}
\usepackage{amsmath,amssymb,amsthm}
\usepackage[hidelinks]{hyperref}
\usepackage{xcolor}
\usepackage{ulem}

\newtheorem{theorem}{Theorem}[section]
\newtheorem{lemma}[theorem]{Lemma}
\newtheorem{corollary}[theorem]{Corollary}
\theoremstyle{remark}
\newtheorem{remark}[theorem]{Remark}

\newcommand{\T}{\mathbb{T}}

\newcommand{\Z}{\mathbb{Z}}
\newcommand{\R}{\mathbb{R}}
\newcommand{\Haus}{\mathcal{H}}
\newcommand{\Fp}{F_p}
\newcommand{\Lp}{L_p}

\newcommand{\bad}{\mathbf{Bad}}
\newcommand{\mpc}{m_p}
\newcommand{\dist}{\operatorname{dist}}
\newcommand{\Iq}{\mathbb{I}_q}
\newcommand{\Et}{E_q^{(t)}}
\newcommand{\mt}{m_t}

\title{Positive Logarithmic Hausdorff Measures of Exceptional Sets for the $p$-adic and $t$-adic Littlewood Conjectures}
\author{Dzmitry BADZIAHIN\textsuperscript{1} \& Volodymyr PAVLENKOV\textsuperscript{2}\& Evgeniy ZORIN\textsuperscript{3}}
\date{}

\begin{document}

\maketitle

\let\thefootnote\relax\footnotetext{\textsuperscript{1}Department of Mathematics, University of Sydney,
Australia, {\texttt{dzmitry.badziahin@sydney.edu.au }}.}
\let\thefootnote\relax\footnotetext{\textsuperscript{2}Department of Mathematics, University of York,
Heslington, York, YO10 5DD, England, {\texttt{volodymr.pavlenkov@york.ac.uk}}.}
\let\thefootnote\relax\footnotetext{\textsuperscript{3} Department of Mathematics, University of York,
Heslington, York, YO10 5DD, England, \texttt{evgeniy.zorin@york.ac.uk}.\\}

\begin{abstract}
We prove that if the exceptional set $E_p$ for the $p$-adic Littlewood
conjecture is non-empty, then its logarithmic Hausdorff dimension is at least
one.  More precisely, whenever $E_p$ is non-empty, it has positive Hausdorff
measure with respect to the gauge function
$
h(r)=\frac{1}{\log(1/r)}.
$
In particular, every non-empty $E_p$ has the cardinality of the continuum.

We obtain stronger conclusions for the $t$-adic Littlewood conjecture over a
finite field $\mathbb F_q$.  For every prime power $q$, non-emptiness of the
exceptional set $E_q^{(t)}$ implies that its $1/\log(1/r)$-Hausdorff measure is
infinite.  Moreover,  when $q$ is odd, we refine the recent construction of Lai and
Sprang~\cite{LaiSprang2026} and prove that
\[
    \mathcal H^{h_{A_q}}(E_q^{(t)})=\infty,
\]
where
\[
    h_{A_q}(r)=\frac{1}{(\log(1/r))^{A_q}},
    \qquad
    A_q=\frac{q-1}{2}\log_2(q-1).
\]
In characteristic two, the corresponding conclusion with exponent one
remains conditional on the existence of a counterexample.
\end{abstract}

\section{Introduction}

Let $p$ be a prime, let $|\cdot|_p$ denote the $p$-adic absolute value
normalised by $|p|_p=p^{-1}$, and, for any $t\in\R$, write $\|t\|=\dist(t,\Z)$.  The $p$-adic
Littlewood conjecture, introduced by de Mathan and Teuli\'e
\cite{deMathanTeulie2004}, states that
\begin{equation} \label{eq:plc}
    \liminf_{q\to\infty}q|q|_p\|qx\|=0
    \qquad\text{for every $x\in\mathbb R$.}
\end{equation}
This conjecture was formulated as a $p$-adic analogue of the classical
Littlewood conjecture, which asserts that
\begin{equation} \label{eq:classical-lc}
    \liminf_{q\to\infty}q\|q\alpha\|\|q\gamma\|=0
    \qquad\text{for every $(\alpha,\gamma)\in\mathbb R^2$.}
\end{equation}
Most of the results known for \eqref{eq:plc} and
\eqref{eq:classical-lc} have close counterparts; see, in particular, the
discussion in \cite[Section~1.2]{EinsiedlerKleinbock2007}.

Let $\bad$ denote the set of badly approximable real
numbers. It is easy to see that every counterexample to
\eqref{eq:plc} belongs to $\bad$.  Similarly, if $(\alpha,\gamma)$
is a counterexample to \eqref{eq:classical-lc}, then both $\alpha$
and $\gamma$ belong to $\bad$. Indeed, these statements follow from
the inequalities $|q|_p\leq1$ and $\|qt\|\leq1/2$, respectively,
after decreasing the resulting positive constant to deal with
finitely many small values of $q$. Due to the Khintchine theorem,
$\bad$ has Lebesgue measure zero, therefore the exceptional sets in
both conjectures have Lebesgue measure zero.  Of course, the
conjectures assert that these exceptional sets are actually empty.

A celebrated theorem of Einsiedler, Katok and Lindenstrauss states
that the set of counterexamples to the classical Littlewood
conjecture has Hausdorff dimension zero
\cite{EinsiedlerKatokLindenstrauss2006}.  The corresponding result
for the $p$-adic Littlewood conjecture was proved by Einsiedler and
Kleinbock~\cite{EinsiedlerKleinbock2007}: the set of exceptions to
\eqref{eq:plc} has Hausdorff dimension zero. (In fact, both results
show that the set of exceptions is a countable union of sets of box
dimension zero).  These results show that a putative exceptional set
must be very small in a metric sense, but do not exclude its
existence.

In the present paper we approach this question from the opposite
direction. In Theorem~\ref{mainTHM1} below we prove that if the set
of counterexamples to the $p$-adic Littlewood conjecture is
non-empty, then it cannot be too small.  More precisely, it has
cardinality continuum and positive Hausdorff measure with respect to
the gauge function
\begin{equation} \label{eq:intro-h0}
    h_1(r)=\frac{1}{\log(1/r)}.
\end{equation}
In fact, we prove the stronger dynamical statement that the
conclusion holds for the closure of the orbit of any counterexample
under the map $x\mapsto px\pmod 1$.  Notice that this does not
contradict the results of
\cite{EinsiedlerKleinbock2007,EinsiedlerKatokLindenstrauss2006}:
Hausdorff dimension zero does not imply that Hausdorff measure with
respect to a logarithmic gauge such as \eqref{eq:intro-h0} vanishes.

There are several known restrictions on the complexity of a putative
counterexample.  Restrictions involving the continued fraction
expansion were obtained by Badziahin, Bugeaud, Einsiedler and
Kleinbock \cite{BadziahinBugeaudEinsiedlerKleinbock2015}. Our
argument uses a simple quantitative consequence of the same
Diophantine condition: a word of length $n$ in the base-$p$
expansion cannot return after fewer than $n-O(1)$ shifts. This
separation of occurrences gives a logarithmic upper bound for the
mass of every cylinder and hence a lower bound for the logarithmic
Hausdorff dimension of the orbit closure.

De Mathan and Teulie~\cite{deMathanTeulie2004} also introduced a
function field version of the $p$-adic Littlewood conjecture, widely
known as the $t$-adic Littlewood conjecture, which is intensively
studied nowadays. Let $\mathbb F_q$ be the finite field of $q$
elements and let $\mathbb F_q((t^{-1}))$ be the field of formal
Laurent series over $\mathbb F_q$. The conjecture asserts that
\begin{equation} \label{eq:intro-tlc}
 \inf_{0\ne Q\in\mathbb F_q[t]}
 |Q|\,|Q|_t\,|\langle Q\Theta\rangle|=0
 \qquad\text{for every }\Theta\in\mathbb F_q((t^{-1})),
\end{equation}
where
\[
    v_t(Q)=\max\{k\geq0:t^k\mid Q\},
    \qquad
    |Q|=q^{\deg Q},
    \qquad
    |Q|_t=q^{-v_t(Q)},
\]
and $\langle\cdot\rangle$ denotes the fractional part. Adiceam,
Nesharim and Lunnon~\cite{AdiceamNesharimLunnon2021} constructed an
explicit counterexample in characteristic three.  Garrett and
Robertson~\cite{GarrettRobertson2026} subsequently found
counterexamples in characteristics $5$, $7$ and $11$ with heavily
computational methods. Adiceam and
Badziahin~\cite{AdiceamBadziahin2025} then provided a purely
mathematical construction of counterexamples in all characteristics
congruent to $3$ modulo $4$.  Most recently, Lai and
Sprang~\cite{LaiSprang2026} gave an explicit construction over every
field of odd characteristic, in fact for the more general
$P(t)$-adic conjecture associated with an arbitrary irreducible
polynomial $P$.


In this paper we also treat the $t$-adic Littlewood conjecture and
show that (see Theorem~\ref{thm:t-conditional}) the set $E_q^{(t)}$
of counterexamples $Q$ to~\eqref{eq:intro-tlc}, if non-emtpy, has an
infinite positive Hausdorff measure with respect to the same gauge
function $h_1(r)$. Moreover, with help of the construction of Lai
and Sprang~\cite{LaiSprang2026} we refine the gauge function and
show that (see Theorem~\ref{thm:t-odd-strong}) over the field
$\mathbb F_q((t^{-1}))$ with odd $q$ one has
\begin{equation} \label{eq:intro-strong-t}
 \Haus^{h_{A_q}}(E_q^{(t)})=\infty
 \quad\text{ where } h_A(r) = \frac{1}{\log^A(1/r)},\quad
 A_q=\frac{q-1}{2}\log_2(q-1).
\end{equation}

\paragraph{Further questions.}
It would be very interesting to obtain an analogous lower bound for the set
of counterexamples to the classical Littlewood conjecture.  Another natural
problem is to strengthen the present result by replacing $h_1$ with a smaller
gauge.  For example, one may ask whether, for some $A>1$ (or perhaps for every
$A>1$), non-emptiness of the exceptional set implies
\begin{equation} \label{eq:future-gauge}
    \Haus^{h_A}(E_p)>0,
    \qquad
    h_A(r)=\frac{1}{(\log(1/r))^A}.
\end{equation}
The same question can be asked for the classical Littlewood conjecture.

Potential improvements of this type may also provide a route towards
the Littlewood conjectures themselves.  More precisely, suppose that
for some gauge $h$ one could prove both that non-emptiness of the
exceptional set implies positive $\Haus^h$-measure and, by
strengthening the methods of
\cite{EinsiedlerKatokLindenstrauss2006,EinsiedlerKleinbock2007},
that the same exceptional set has zero $\Haus^h$-measure.  The two
estimates would force the exceptional set to be empty.  Thus,
sufficiently strong and incompatible lower and upper estimates for a
putative exceptional set could lead to a proof of the corresponding
Littlewood conjecture.

\section{Logarithmic Hausdorff Measures of Exceptional Sets for the p-adic Littlewood Conjectures}
\subsection{Hausdorff measures, notation and the main result}

We briefly recall the definitions used below.  A gauge function (also called
a dimension function) is a continuous non-decreasing function
$h:[0,\infty)\to[0,\infty)$ such that $h(0)=0$ and $h(r)>0$ for $r>0$.
Let $M$ be a metric space. For $E\subseteq M$ and $\delta>0$, put
\begin{equation} \label{eq:hausdorff-content}
    \Haus^h_\delta(E)
    =\inf\left\{\sum_i h(\operatorname{diam}U_i):
      E\subseteq\bigcup_iU_i,\ \operatorname{diam}U_i\leq\delta\right\},
\end{equation}
where the infimum is taken over all countable covers of $E$ with
diameters at most $\delta$.  The Hausdorff $h$-measure of $E$ is
\begin{equation} \label{eq:hausdorff-measure}
    \Haus^h(E)=\lim_{\delta\to0}\Haus^h_\delta(E).
\end{equation}
For $h(r)=r^s$, this is the usual $s$-dimensional Hausdorff measure
$\Haus^s$, and
\begin{equation} \label{eq:hausdorff-dimension}
    \dim_{\mathrm H}E
    =\inf\{s\geq0:\Haus^s(E)=0\}
    =\sup\{s\geq0:\Haus^s(E)=\infty\}.
\end{equation}

For $s>0$, let
\[
    h_s(r)=\frac{1}{(\log(1/r))^s},
    \qquad 0<r<e^{-1},
\]
set $h_s(0)=0$, and extend $h_s$ away from zero in any continuous
non-decreasing way.  We call $\Haus^{h_s}$ the logarithmic
$s$-dimensional Hausdorff measure and define the logarithmic Hausdorff
dimension by
\begin{equation} \label{eq:log-dimension}
    \dim_{\log}E
    =\inf\{s>0:\Haus^{h_s}(E)=0\}.
\end{equation}

We shall use the following standard mass distribution principle for a general
gauge function \cite[Section~2.1]{BeresnevichVelani2006}.  We state it in the
form valid for an arbitrary metric space.
\begin{lemma}[Mass distribution principle]
Let $(M,d)$ be a metric space and let $\mu$ be a Borel probability measure
supported on $E\subseteq M$.
Suppose that there exist $c,r_0>0$ such that
\begin{equation}\label{eq:mass-distribution-hypothesis}
    \mu(U)\leq c\,h(\operatorname{diam}U)
\end{equation}
for every Borel set $U\subseteq M$ with
$\operatorname{diam}U\leq r_0$.  Then
$\Haus^h(E)\geq c^{-1}$.
\end{lemma}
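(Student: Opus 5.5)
The plan is to compare an arbitrary $\delta$-cover of $E$ directly with $\mu$. Fix $\delta\in(0,r_0]$ and let $\{U_i\}$ be any countable cover of $E$ with $\operatorname{diam}U_i\le\delta$. The $U_i$ need not be Borel, so I first enlarge each one to a Borel set of the same diameter. Replacing $U_i$ by its closure $\overline{U_i}$ is the natural choice: the closure is closed, hence Borel, and $\operatorname{diam}\overline{U_i}=\operatorname{diam}U_i$ in any metric space, by the triangle inequality applied to approximating points. Since $\operatorname{diam}\overline{U_i}\le\delta\le r_0$, the hypothesis \eqref{eq:mass-distribution-hypothesis} applies and gives $\mu(\overline{U_i})\le c\,h(\operatorname{diam}U_i)$.

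Next I use that $\mu$ is a probability measure supported on $E$. I read this as $\mu(M\setminus E)=0$; if "support" is meant as the closed support, I read it as $\mu$ being concentrated on $E$, or work with $\overline E$ and note that in the paper's applications $E$ is closed. Then
\[
1=\mu(E)\le\mu\Bigl(\bigcup_i\overline{U_i}\Bigr)\le\sum_i\mu(\overline{U_i})\le c\sum_i h(\operatorname{diam}U_i),
\]
using countable subadditivity. If $E$ itself is not measurable, I replace $\mu(E)$ by the outer measure, which is still $1$, since $\mu$ gives full mass to a Borel subset of $E$ and in any case $E\subseteq\bigcup_i\overline{U_i}$.

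Taking the infimum over all such covers gives $\Haus^h_\delta(E)\ge c^{-1}$ for every $\delta\le r_0$. Letting $\delta\to0$ in \eqref{eq:hausdorff-measure} yields $\Haus^h(E)\ge c^{-1}$.

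The only delicate point is measurability: making sure the cover sets can be replaced by Borel sets without increasing their diameters, and interpreting "supported on $E$" so that $\mu(E)=1$, or the outer measure of $E$ is $1$. Taking closures handles the first issue. The second is a matter of convention and does not affect the argument.
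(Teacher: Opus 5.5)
Your argument is correct, and it is the standard proof of the mass distribution principle. The paper gives no proof of its own and only cites \cite[Section~2.1]{BeresnevichVelani2006}; your closure step (needed because the hypothesis covers only Borel sets while $\Haus^h_\delta$ allows arbitrary covers) and your reading of ``supported on $E$'' as $\mu$ being concentrated on $E$ handle the two technical points correctly, and in the paper's applications $E$ is compact ($\Fp(x)$, $K(\Theta)$, $\mathcal C_q$), so $\mu(E)=1$ there in any case.
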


%

We work on the circle $\T=\mathbb R/\mathbb Z$ and consider the continuous
map
\begin{equation} \label{eq:map}
    T_p(y)=py\pmod 1.
\end{equation}
For $x\in\T$ consider the closure
\begin{equation}  \label{eq:orbit-closure}
    \Fp(x)=\overline{\{T_p^k x:k\geq 0\}}.
\end{equation}
We also put
\begin{equation} \label{eq:mp}
    \mpc(x)=\liminf_{q\to\infty}q|q|_p\|qx\|
\end{equation}
and, for $\varepsilon>0$, define
\begin{equation} \label{eq:Lpepsilon}
    \Lp(\varepsilon)
    =\{x\in\T:\mpc(x)\geq\varepsilon\}.
\end{equation}
The exceptional set for \eqref{eq:plc} is therefore can be described
as
\begin{equation} \label{eq:exceptional-set}
    E_p=\{x\in\T:\mpc(x)>0\}.
\end{equation}
The set $F_p(x)$ satisfies the quantitative form of the
orbit-closure property which can be written as
\begin{equation} \label{eq:orbit-stability}
    x\in\Lp(\varepsilon)
    \quad\Longrightarrow\quad
    \Fp(x)\subseteq\Lp(\varepsilon).
\end{equation}
Statement \eqref{eq:orbit-stability} is proved in
\cite[Lemma~3.1]{BlackmanKristensenNorthey2024}.  A closely related reduction
from a lower bound for the liminf to a uniform lower bound over all positive
integers appears in \cite[Lemma~1]{Badziahin2026}.


Our main result about the counterexamples to the $p$-adic Littlewood
conjecture is

\begin{theorem}\label{mainTHM1}
Let $p$ be a prime and suppose that $x\in\T$ satisfies $\mpc(x)>0$.  Then
\begin{equation} \label{eq:main-conclusion}
    \Haus^{h_1}\bigl(\Fp(x)\bigr)>0.
\end{equation}
More generally, if $h$ is a dimension function such that
\begin{equation} \label{eq:gauge-positive}
    \liminf_{r\to0}h(r)\log(1/r)>0,
\end{equation}
then $\Haus^h(\Fp(x))>0$.  If
\begin{equation} \label{eq:gauge-infinite}
    h(r)\log(1/r)\longrightarrow\infty
    \qquad\text{as $r\to0$},
\end{equation}
then
\begin{equation} \label{eq:infinite-conclusion}
    \Haus^h\bigl(\Fp(x)\bigr)=\infty.
\end{equation}
\end{theorem}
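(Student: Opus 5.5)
The plan is to apply the mass distribution principle (the Lemma above) to an invariant measure supported on $\Fp(x)$. The measure should give every base-$p$ cylinder of length $n$ mass $O(1/n)$; since such a cylinder has diameter $p^{-n}$, this is $O(1/\log(1/r))$ with $r=p^{-n}$.

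\emph{Step 1 (uniform Diophantine bound).} Put $\varepsilon=\mpc(x)>0$. By \eqref{eq:orbit-stability}, every $z\in\Fp(x)$ lies in $\Lp(\varepsilon)$. Since $\Fp(x)\subseteq\bad$, we may shrink the constant to handle small $q$, as in \cite[Lemma~1]{Badziahin2026}, to get some $\varepsilon'>0$ such that
\[
q|q|_p\|qz\|\geq\varepsilon' \quad\text{for all } q\geq1 \text{ and all } z\in\Fp(x).
\]
This small-$q$ reduction is the step that needs care. It requires either a uniform version of the cited lemma over the compact set $\Fp(x)$, or a direct argument that points of $\Lp(\varepsilon)$ are uniformly badly approximable for bounded $q$.

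\emph{Step 2 (separation of occurrences).} Let $x=0.a_1a_2\ldots$ in base $p$. Suppose a word of length $n$ occurs in this expansion at positions $i$ and $i+k$ with $0<k<n$. Put $z=T_p^ix$. Then $z$ and $p^kz$ agree in their first $n-k$ digits, so $\|(p^k-1)z\|\leq p^{-(n-k)}$. Taking $q=p^k-1$, for which $|q|_p=1$, Step 1 gives $p^k\cdot p^{k-n}\geq\varepsilon'$. Hence $k\geq n/2-C$ with $C$ depending only on $p$ and $\varepsilon'$. This is weaker than the $n-O(1)$ separation mentioned in the introduction, but any linear separation suffices. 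Consequently, in any window of $N$ consecutive positions a fixed word of length $n$ occurs at most $N/(n/2-C)+1$ times.

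\emph{Step 3 (the measure).} Let $\mu$ be a weak-$*$ limit point of $\mu_N=\frac1N\sum_{j<N}\delta_{T_p^jx}$. It is a Borel probability measure, and since each $\mu_N$ is supported on the closed set $\Fp(x)$, so is $\mu$. Take a closed $p$-adic interval $I$ of length $p^{-n}$. Points of $I$ have base-$p$ expansions beginning with one of at most three words of length $n$, the extra ones coming from the two endpoints. By Step 2,
\[
\mu_N(I)\leq 3\left(\frac{1}{n/2-C}+\frac1N\right).
\]
By upper semicontinuity of weak limits on closed sets, $\mu(I)\leq C'/n$ for all large $n$.

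\emph{Step 4 (conclusion).} Let $U$ be Borel with $\operatorname{diam}U=r$ small, and choose $n$ with $p^{-n-1}<r\leq p^{-n}$. Then $U$ is covered by at most two closed $p$-adic intervals of length $p^{-n}$. Therefore
\[
\mu(U)\leq \frac{2C'}{n}\leq \frac{C''}{\log(1/r)}.
\]
If \eqref{eq:gauge-positive} holds, then $h(r)\geq \delta/\log(1/r)$ for small $r$, so \eqref{eq:mass-distribution-hypothesis} holds and the Lemma gives $\Haus^h(\Fp(x))>0$; in particular this holds for $h_1$. If \eqref{eq:gauge-infinite} holds, then for every $c>0$ we have $\mu(U)\leq c\,h(\operatorname{diam}U)$ once $\operatorname{diam}U\leq r_0(c)$. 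The Lemma then gives $\Haus^h(\Fp(x))\geq c^{-1}$ for every $c>0$, which yields \eqref{eq:infinite-conclusion}.
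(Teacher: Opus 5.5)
There is a genuine gap in Step~1, and neither of the remedies you suggest closes it. The property \eqref{eq:orbit-stability} only gives $m_p(z)\geq\varepsilon$ for each $z\in F_p(x)$ separately. For each $z$ there is a threshold $Q_0(z)$ beyond which $q|q|_p\|qz\|\geq\varepsilon/2$, but nothing bounds $Q_0(z)$ uniformly in $z$. So ``small $q$'' is not a fixed finite range. Compactness of $F_p(x)$ together with irrationality of its points handles any \emph{fixed} finite set of $q$, but it does not produce $\varepsilon'$. A ``uniform version over the compact set'' is just a restatement of what has to be proved.

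If you apply the single-point reduction to each orbit point $z=T_p^ix$ separately, you get constants $\varepsilon'(i)$. The constant $C$ in Step~2 then depends on the position $i$, which destroys the counting in Step~3. The missing idea is the identity
\[
q|q|_p\|qT_p^ix\|=(p^iq)\,|p^iq|_p\,\|p^iqx\|,
\]
valid because $|p^iq|_p=p^{-i}|q|_p$. Now take the single constant $\beta=\inf_{q\geq1}q|q|_p\|qx\|$. It is positive by the elementary one-point argument: $x$ is irrational, and only finitely many $q$ have $q|q|_p\|qx\|<m_p(x)/2$. The identity shows that $\beta$ bounds the product from below at \emph{every} orbit point and for every $q$. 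Continuity of $y\mapsto\|qy\|$ then extends this to all of $F_p(x)$ (Lemma~\ref{lemma_for_orbit_closure}). You do not need \eqref{eq:orbit-stability} at all, and your Steps~2--3 only use the bound along the orbit anyway.

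With this repair, the rest of your argument is correct and takes a slightly different route from the paper. You count occurrences of a word in the digit sequence of $x$ and pass to the weak-$*$ limit via the portmanteau inequality on closed intervals. This uses the separation only along the orbit and never uses $T_p$-invariance of $\mu$. The paper instead proves the return-time bound for every $y\in F_p(x)$, then combines invariance of $\mu$ with disjointness of $[w],T_p^{-1}[w],\ldots,T_p^{-(n-C)}[w]$. For the infinite case, applying the mass distribution principle with arbitrarily small $c$ is a direct substitute for the comparison principle.

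One small slip in Step~2: $w$ occurs in full at both positions $i$ and $i+k$. So $z$ and $T_p^kz$ agree in their first $n$ digits, not merely $n-k$. Using this gives $k>n-C$ as in the paper, although your weaker $k\geq n/2-C$ is still enough.
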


The proof of Theorem~\ref{mainTHM1} will be presented in
Section~\ref{proof1} after establishing all the required preliminary
statements.

\begin{corollary}\label{cor_cardinal_cont}
If $E_p$ is non-empty, then
\begin{equation} \label{eq:exceptional-positive}
    \Haus^{h_1}(E_p)>0
\end{equation}
and $E_p$ has cardinality continuum.
\end{corollary}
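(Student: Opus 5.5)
The corollary follows from Theorem~\ref{mainTHM1} together with the orbit-closure property \eqref{eq:orbit-stability}.

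\textbf{Step 1: the orbit closure lies in $E_p$.} Suppose $E_p\neq\emptyset$ and pick $x\in E_p$, so that $\varepsilon:=\mpc(x)>0$. Then $x\in\Lp(\varepsilon)$. By \eqref{eq:orbit-stability} we get $\Fp(x)\subseteq\Lp(\varepsilon)$. Every $y\in\Lp(\varepsilon)$ satisfies $\mpc(y)\geq\varepsilon>0$, so $\Lp(\varepsilon)\subseteq E_p$, and hence $\Fp(x)\subseteq E_p$.

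\textbf{Step 2: positive measure.} Hausdorff $h$-measure is monotone under inclusion, since any cover of the larger set also covers the smaller one. Combining this with \eqref{eq:main-conclusion} gives
\[
\Haus^{h_1}(E_p)\geq\Haus^{h_1}(\Fp(x))>0 ,
\]
which is \eqref{eq:exceptional-positive}.

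\textbf{Step 3: cardinality.} We show that every countable set $C\subseteq\T$ has $\Haus^{h_1}(C)=0$. Since $h_1(r)\to0$ as $r\to0$, we can fix $\eta>0$ and $\delta>0$ and enumerate $C=\{c_1,c_2,\dots\}$. Cover each $c_i$ by a ball of diameter $r_i\leq\delta$ chosen so that $h_1(r_i)\leq\eta 2^{-i}$. This gives $\Haus^{h_1}_\delta(C)\leq\eta$ for every $\eta>0$, so $\Haus^{h_1}_\delta(C)=0$ for all $\delta$, and therefore $\Haus^{h_1}(C)=0$. Consequently $E_p$ is uncountable, and in particular $\Fp(x)$ is uncountable.

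To get the cardinality of the continuum, it suffices to note that $E_p\subseteq\T$, so $|E_p|\leq 2^{\aleph_0}$. For the lower bound, $\Fp(x)$ is a closed subset of $\T$, hence Borel. By the perfect set property for closed sets (Cantor--Bendixson), an uncountable closed set contains a perfect set and so has cardinality $2^{\aleph_0}$. This avoids any appeal to the continuum hypothesis. Since $\Fp(x)\subseteq E_p$, we conclude $|E_p|=2^{\aleph_0}$.

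The only point needing care is this last one: uncountability alone gives continuum cardinality only via the perfect set property, and closedness of the orbit closure $\Fp(x)$ is exactly what makes that property available. Everything else is immediate from Theorem~\ref{mainTHM1} and \eqref{eq:orbit-stability}.
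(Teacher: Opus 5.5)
Your proof is correct and follows the paper's argument: $\Fp(x)\subseteq E_p$ by \eqref{eq:orbit-stability}, monotonicity of $\Haus^{h_1}$ gives the positive measure, and countable sets having zero $h_1$-measure plus closedness of $\Fp(x)$ (perfect set property) gives cardinality continuum. One small wording slip: the uncountability of $\Fp(x)$ follows directly from $\Haus^{h_1}(\Fp(x))>0$, not ``in particular'' from the uncountability of $E_p$, which is the wrong direction of implication.
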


\begin{proof}[Proof of Corollary~\ref{cor_cardinal_cont}]
Choose $x\in E_p$.  By \eqref{eq:orbit-stability}, $\Fp(x)\subseteq
E_p$, and \eqref{eq:exceptional-positive} follows from
\eqref{eq:main-conclusion}.  Since $h_1(r)\to0$ as $r\to0$, a
countable set has zero $h_1$-Hausdorff measure.  Hence $\Fp(x)$ is
uncountable.  It is also compact, and therefore has cardinality
continuum.  Since $\Fp(x)\subseteq E_p\subseteq\T$, the same is true
of $E_p$.
\end{proof}

%

\begin{corollary}\label{cor_dimlog_bound}
Under the assumptions of the Theorem~\ref{mainTHM1},
\begin{equation} \label{eq:log-dimension-bound}
    \dim_{\log}\Fp(x)\geq1.
\end{equation}
\end{corollary}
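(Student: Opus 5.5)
The plan is to read the corollary directly off Theorem~\ref{mainTHM1} together with the definition \eqref{eq:log-dimension} of $\dim_{\log}$. By definition, $\dim_{\log}\Fp(x)=\inf\{s>0:\Haus^{h_s}(\Fp(x))=0\}$. It therefore suffices to show that $\Haus^{h_s}(\Fp(x))>0$ for every $0<s\le1$. Then no $s\le 1$ belongs to the set over which the infimum is taken, so the infimum is at least $1$. If the set is empty, the infimum is $+\infty\ge1$ by convention.

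First, fix $s\in(0,1]$ and check that $h=h_s$ is an admissible dimension function satisfying \eqref{eq:gauge-positive}. For $0<r<e^{-1}$ we have $\log(1/r)>1$, hence
\[
  h_s(r)\log(1/r)=(\log(1/r))^{1-s}\ge 1 .
\]
So the liminf in \eqref{eq:gauge-positive} is at least $1>0$. By the construction of $h_s$ in the text it is continuous, non-decreasing and vanishes only at $0$, so it is a gauge.

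Next, apply the general clause of Theorem~\ref{mainTHM1}. This gives $\Haus^{h_s}(\Fp(x))>0$ for every $s\in(0,1]$. For $s<1$ one could even invoke \eqref{eq:gauge-infinite}, since $(\log(1/r))^{1-s}\to\infty$, to get infinite measure, but positivity already suffices.

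There is no real obstacle. The only points to watch are that $h_s$ was extended away from $0$ in a way that keeps it a legitimate gauge (as stipulated in the definition), and that the infimum convention handles the case where no $s$ gives measure zero.
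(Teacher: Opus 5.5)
Your proposal is correct and is essentially the argument the paper intends (it gives no separate proof). Each $h_s$ with $0<s\le 1$ satisfies \eqref{eq:gauge-positive}, so Theorem~\ref{mainTHM1} gives $\Haus^{h_s}(\Fp(x))>0$. Hence no $s\le 1$ lies in the set defining $\dim_{\log}\Fp(x)$, and the infimum is at least $1$.
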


\subsection{A uniform Diophantine estimate on the orbit closure}\label{s3}

We first record the uniform estimate that will be used in the proof
of Theorem~\ref{mainTHM1}.  It is slightly different from
\eqref{eq:orbit-stability}: the constant may be smaller than
$\mpc(x)$, but the estimate holds for every denominator and is
uniform over the whole orbit closure.

\begin{lemma}\label{lemma_for_orbit_closure}
Suppose that $\mpc(x)>0$ and set
\begin{equation}  \label{eq:beta}
    \beta=\inf_{q\geq1}q|q|_p\|qx\|.
\end{equation}
Then $\beta>0$ and
\begin{equation}  \label{eq:uniform-bound}
    q|q|_p\|qy\|\geq\beta
    \qquad\text{for every $q\geq1$ and every $y\in\Fp(x)$.}
\end{equation}
\end{lemma}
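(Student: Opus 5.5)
Two things need to be shown: that $\beta>0$, and that the bound passes from the orbit of $x$ to its closure.

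\emph{Positivity of $\beta$.} Since $\mpc(x)>0$, there are $\varepsilon>0$ and $Q_0$ with $q|q|_p\|qx\|\geq\varepsilon$ for all $q>Q_0$. For each of the finitely many $q\leq Q_0$ we have $q|q|_p>0$, so it suffices to check $\|qx\|>0$, that is, $x$ irrational. If $x=a/b$ were rational, then along $q=bk$ we would get $\|qx\|=0$ for infinitely many $q$, forcing $\mpc(x)=0$, a contradiction. Hence $\beta$ is the minimum of $\varepsilon$ and finitely many positive numbers, so $\beta>0$.

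\emph{The orbit points $y=T_p^kx$.} Fix $q\geq1$ and $k\geq0$, and put $q'=p^kq$. Then $\|q\,T_p^kx\|=\|qp^kx\|=\|q'x\|$, because $T_p^kx\equiv p^kx \pmod 1$. Also $q'|q'|_p=p^kq\cdot p^{-k}|q|_p=q|q|_p$. Therefore
\[
q|q|_p\|q\,T_p^kx\|=q'|q'|_p\|q'x\|\geq\beta .
\]
This identity, showing that the $p$-power factor is invisible to $q|q|_p$, is the heart of the argument.

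\emph{Passage to the closure.} For fixed $q$, the map $y\mapsto q|q|_p\|qy\|$ is continuous on $\T$. The set $\{y\in\T:\ q|q|_p\|qy\|\geq\beta\}$ is therefore closed, and by the previous step it contains the orbit $\{T_p^kx\}$. Hence it contains $\Fp(x)$, which gives \eqref{eq:uniform-bound}.

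There is no real obstacle here. The only points needing care are excluding rational $x$ when proving $\beta>0$, and noting that continuity of $\|\cdot\|$ on $\T$ justifies taking the closure.
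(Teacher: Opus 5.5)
Your proof is correct and follows the paper's argument: irrationality of $x$ together with the finitely many small $q$ gives $\beta>0$, the identity $p^kq\,|p^kq|_p=q|q|_p$ handles the orbit points, and continuity of $y\mapsto\|qy\|$ passes the bound to the closure. One wording fix: $\beta$ is bounded below by the minimum of $\varepsilon$ and those finitely many positive values, not equal to it, but this is all the argument needs.
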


\begin{proof}[Proof of Lemma~\ref{lemma_for_orbit_closure}]
The condition $\mpc(x)>0$ implies that $x$ is irrational.  Hence
every term in the infimum in \eqref{eq:beta} is positive.  All
sufficiently large terms are bounded from below by $\mpc(x)/2$, and
the remaining terms belong to a finite set. This proves that
$\beta>0$.

For every $q\geq1$ and $k\geq0$, we have
\begin{align} \label{eq:orbit-uniform}
    q|q|_p\|qT_p^k x\|
    &=q|q|_p\|qp^k x\| \notag\\
    &=p^kq\,|p^kq|_p\,\|p^kqx\| \notag\\
    &\geq\beta.
\end{align}
For each fixed $q$, the function $y\mapsto\|qy\|$ is continuous on
$\T$. Passing to the closure of the orbit in
\eqref{eq:orbit-uniform} proves \eqref{eq:uniform-bound}.
\end{proof}

\begin{corollary}\label{cor_no_rat_points} $\Fp(x)$ contains no
rational point. In particular, $0\notin\Fp(x)$.  We may therefore
identify $\Fp(x)$ with a compact subset of $(0,1)$, and every point
of $\Fp(x)$ has a unique base-$p$ expansion.
\end{corollary}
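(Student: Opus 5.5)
The statement to prove is Corollary~\ref{cor_no_rat_points}. The plan is to apply Lemma~\ref{lemma_for_orbit_closure} directly.

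Suppose, for contradiction, that $y\in\Fp(x)$ is rational. Write $y=a/b$ with $b\geq1$ as an integer representative modulo $1$. Then $\|by\|=0$, so taking $q=b$ gives
\[
  b|b|_p\|by\|=0<\beta .
\]
This contradicts \eqref{eq:uniform-bound}, since $\beta>0$ by Lemma~\ref{lemma_for_orbit_closure}. The case $y=0$ is the special case $b=1$, because $\|1\cdot 0\|=0$. Hence $0\notin\Fp(x)$.

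Next I justify the identification with a compact subset of $(0,1)$. The set $\Fp(x)$ is closed in the compact space $\T$, so it is compact. The map $\T\setminus\{0\}\to(0,1)$ sending each class to its representative is a homeomorphism, and $\Fp(x)\subseteq\T\setminus\{0\}$. The image of $\Fp(x)$ under this map is therefore a compact subset of $(0,1)$, and in particular it is bounded away from $0$ and $1$.

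Finally, a real number in $(0,1)$ has two base-$p$ expansions only if it is of the form $m/p^n$, one expansion ending in zeros and the other in digits $p-1$. Such a number is rational, and we have just shown that $\Fp(x)$ contains no rational points. So every point of $\Fp(x)$ has a unique base-$p$ expansion.

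There is no real obstacle here. The only point needing care is to use the uniform bound \eqref{eq:uniform-bound}, which holds for every $q\geq1$ and every point of the orbit closure, rather than the liminf condition; the liminf alone would not exclude rational limit points.
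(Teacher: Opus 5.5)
Your proof is correct and follows the paper's argument: a rational point $a/b$ in $\Fp(x)$ gives $\|qy\|=0$ for $q=b$ (the paper uses any positive multiple of $b$), which contradicts the uniform bound \eqref{eq:uniform-bound}. Your extra justification of the identification with a compact subset of $(0,1)$, and of uniqueness of base-$p$ expansions via the rationality of numbers $m/p^n$, fills in details the paper leaves implicit.
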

\begin{proof}[Proof of Corollary~\ref{cor_no_rat_points}]
Indeed, if $y=a/b \in \Fp(x)$ is rational, then $\|qy\|=0$ for
every positive multiple $q$ of $b$ contradicting
\eqref{eq:uniform-bound}.
\end{proof}

\subsection{Return times to cylinders}\label{s4}

For a word $w$ of length $n$ over $\{0,1,\ldots,p-1\}$, let $[w]$
denote the corresponding base-$p$ cylinder intersected with
$\Fp(x)$.  Thus $[w]$ is an interval of length $p^{-n}$, intersected
with the orbit closure. By the order of $[w]$ we call the length of
the corresponding word $w$.

\begin{lemma}\label{lemma_return_times}
Let $y\in\Fp(x)$ and $d\geq1$.  If $y$ and $T_p^d y$ belong to the
same base-$p$ cylinder of order $n$, then
\begin{equation}  \label{eq:return-bound}
    n<d+\log_p\frac{1}{\beta},
\end{equation}
where $\beta$ is defined in~\eqref{eq:beta}.
\end{lemma}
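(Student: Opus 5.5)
Apply the uniform estimate of Lemma~\ref{lemma_for_orbit_closure} to the point $y$ itself, with the denominator $q=p^d-1$. By Corollary~\ref{cor_no_rat_points}, $y\in(0,1)$ has a unique base-$p$ expansion $y=0.y_1y_2y_3\ldots$, and $T_p^d y=0.y_{d+1}y_{d+2}\ldots$. The hypothesis that $y$ and $T_p^d y$ lie in the same cylinder of order $n$ says that $y_i=y_{d+i}$ for $1\le i\le n$. Consequently the real number $T_p^d y - y = p^d y - \lfloor p^d y\rfloor - y$ has absolute value less than $p^{-n}$: two numbers in $(0,1)$ sharing their first $n$ base-$p$ digits lie in a common half-open interval of length $p^{-n}$. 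Hence
\[
    \|(p^d-1)y\| = \bigl\|p^dy - y\bigr\| \le \bigl|T_p^d y - y\bigr| < p^{-n}.
\]

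Next I would evaluate the remaining factors for $q=p^d-1$. Since $p\nmid p^d-1$, we have $|q|_p=1$, so $q|q|_p\|qy\| < (p^d-1)p^{-n} < p^{d-n}$. On the other hand, Lemma~\ref{lemma_for_orbit_closure} applies to $y\in\Fp(x)$ and gives $q|q|_p\|qy\|\ge\beta$. Combining the two, $\beta<p^{d-n}$, that is $n<d+\log_p(1/\beta)$, which is \eqref{eq:return-bound}.

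The only point needing care is the strict inequality $|T_p^dy-y|<p^{-n}$. It relies on uniqueness of expansions, so that the cylinder is a genuine half-open interval $[a p^{-n},(a+1)p^{-n})$ containing both points. This is supplied by Corollary~\ref{cor_no_rat_points}. Even if one were careless here, $\le$ would give $n\le d+\log_p(1/\beta)$, and strictness is recovered from the factor $p^d-1<p^d$. There is no real obstacle; the key idea is simply the choice of the $p$-adically trivial denominator $p^d-1$.
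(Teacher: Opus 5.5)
Your proof is correct and follows essentially the same argument as the paper. The paper also applies the uniform bound to $y$ with $q=p^d-1$, uses $|q|_p=1$ and $\|(p^d-1)y\|=\|T_p^dy-y\|\le p^{-n}$, and obtains $\beta<p^{d-n}$. For strictness it uses only the factor $p^d-1<p^d$, which is the second of the two routes you noted.
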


\begin{proof}[Proof of Lemma~\ref{lemma_return_times}]
Take $q=p^d-1$.  Since $|p^d-1|_p=1$, it follows from
\eqref{eq:uniform-bound} that
\begin{equation} \label{eq:return-start}
    \beta
    \leq(p^d-1)\bigl\|(p^d-1)y\bigr\|
    =(p^d-1)\|T_p^dy-y\|.
\end{equation}
If $y$ and $T_p^dy$ belong to the same cylinder of order $n$, then
\begin{equation} \label{eq:cylinder-distance}
    \|T_p^dy-y\|\leq p^{-n}.
\end{equation}
Combining \eqref{eq:return-start} and \eqref{eq:cylinder-distance},
we obtain
\begin{equation}   \label{eq:return-comparison}
    \beta<(p^d)p^{-n}=p^{d-n},
\end{equation}
which is equivalent to \eqref{eq:return-bound}.
\end{proof}


Put
\begin{equation} \label{eq:C}
    C=\left\lceil\log_p\frac{1}{\beta}\right\rceil.
\end{equation}
Then the return-time Lemma~\ref{lemma_return_times} implies that,
for every word $w$ of length $n>C$, the relative preimages
\begin{equation} \label{eq:disjoint-preimages}
    [w],\ T_p^{-1}[w],\ \ldots,\ T_p^{-(n-C)}[w]
\end{equation}
are pairwise disjoint subsets of $\Fp(x)$.  Indeed, if $z\in
T_p^{-i}[w]\cap T_p^{-j}[w]$ for some $0\leq i<j\leq n-C$, then
$T_p^iz$ and $T_p^jz$ belong to the same cylinder of order $n$.
Applying \eqref{eq:return-bound} with $d=j-i$ gives
\begin{equation} \label{eq:disjoint-contradiction}
    n<j-i+\log_p\frac{1}{\beta}
    \leq n-C+\log_p\frac{1}{\beta}
      \leq n,
\end{equation}
which is a contradiction.

\subsection{Proof of Theorem~\ref{mainTHM1}}\label{proof1}

Since $\Fp(x)$ is compact and forward $T_p$-invariant, it supports a
$T_p$-invariant Borel probability measure $\mu$.  For completeness,
one can take any weak limit point of the normalised counting
measures
\begin{equation} \label{eq:empirical-measures}
    \mu_N=\frac{1}{N}\sum_{k=0}^{N-1}\delta_{T_p^kx}.
\end{equation}
Such a limit is supported on $\Fp(x)$, and its invariance follows
from
\begin{equation} \label{eq:asymptotic-invariance}
    (T_p)_*\mu_N-\mu_N
    =\frac{1}{N}\bigl(\delta_{T_p^Nx}-\delta_x\bigr).
\end{equation}

By invariance, all the sets in \eqref{eq:disjoint-preimages} have
the same $\mu$-measure.  Their pairwise disjointness gives
\begin{equation}  \label{eq:cylinder-sum}
    (n-C+1)\mu([w])\leq1,
\end{equation}
and hence
\begin{equation} \label{eq:cylinder-measure}
    \mu([w])\leq\frac{1}{n-C+1}.
\end{equation}

Let $I\subseteq(0,1)$ be an interval of sufficiently small length
and choose $n$ so that
\begin{equation} \label{eq:choice-n}
    p^{-(n+1)}<|I|\leq p^{-n}.
\end{equation}
The interval $I$ meets at most three base-$p$ cylinders of order
$n$. It follows from \eqref{eq:cylinder-measure} that
\begin{equation} \label{eq:interval-frostman}
    \mu(I)\leq\frac{3}{n-C+1}
    \ll_{p,\beta}\frac{1}{\log(1/|I|)}
    =h_1(|I|).
\end{equation}
Every subset of $\mathbb R$ of diameter $r$ is contained in an
interval of length $r$.  Thus \eqref{eq:interval-frostman} gives
\begin{equation} \label{eq:frostman-bound}
    \mu(U)\ll_{p,\beta}h_1(\operatorname{diam}U)
\end{equation}
for every set $U$ of sufficiently small diameter.  The mass
distribution principle for the dimension function $h_1$
\cite[Section~2.1]{BeresnevichVelani2006} now yields
\eqref{eq:main-conclusion}.

If $h$ satisfies \eqref{eq:gauge-positive}, then $h(r)\gg h_1(r)$
for all sufficiently small $r$, and the same mass distribution
argument gives $\Haus^h(\Fp(x))>0$.  Finally, if
\eqref{eq:gauge-infinite} holds, then
\begin{equation} \label{eq:gauge-ratio}
    \frac{h_1(r)}{h(r)}\longrightarrow0.
\end{equation}
The standard comparison principle for Hausdorff measures
\cite[Lemma~1]{BeresnevichVelani2006}, applied to
\eqref{eq:main-conclusion} and \eqref{eq:gauge-ratio}, gives
\eqref{eq:infinite-conclusion}.  This completes the proof of
Theorem~\ref{mainTHM1}.

%

\begin{remark}
The Diophantine input is used only through the uniform estimate
\eqref{eq:uniform-bound}.  Its symbolic consequence is the linear
lower bound \eqref{eq:return-bound} for return times to cylinders.
Any stronger uniform return-time estimate would give a
correspondingly smaller logarithmic gauge by the same argument.
\end{remark}

\section{Logarithmic Hausdorff Measures of Exceptional Sets for the t-adic Littlewood Conjectures}
\subsection{The $t$-adic orbit-closure argument}

Let $q$ be a prime power.  For a non-zero Laurent series
\[
 \Gamma=\sum_{j\geq -j_0}c_jt^{-j}\in\mathbb F_q((t^{-1})),
 \qquad c_{-j_0}\ne0,
\]
put $\deg\Gamma=j_0$ and $|\Gamma|=q^{\deg\Gamma}$. By convention,
we also state $|0|=0$. Denote the fractional part of $\Gamma$ by
\[
 \langle\Gamma\rangle:=\sum_{j\geq1}c_jt^{-j}.
\]
The set
\[
 \Iq=t^{-1}\mathbb F_q[[t^{-1}]]
\]
is a compact metric space for the ultrametric
\begin{equation} \label{eq:t-metric}
 d(\Theta,\Phi)=|\Theta-\Phi|.
\end{equation}
If the first term at which $\Theta$ and $\Phi$ differ is for
$t^{-n}$, then $d(\Theta,\Phi)=q^{-n}$.  Thus the cylinders obtained
by prescribing the first $n$ coefficients of the elements in $\Iq$
are precisely the balls of diameter $q^{-(n+1)}$.

For $\Theta\in\Iq$ define
\begin{equation} \label{eq:mt-definition}
 \mt(\Theta)
 =\inf_{0\ne Q\in\mathbb F_q[t]}
 |Q|\,|Q|_t\,|\langle Q\Theta\rangle|,
 \qquad |Q|_t=q^{-v_t(Q)},
\end{equation}
and let
\begin{equation} \label{eq:t-exceptional-set}
 \Et=\{\Theta\in\Iq:\mt(\Theta)>0\}.
\end{equation}
Since every non-zero polynomial is of the form $t^kM$, where
$M(0)\ne0$, we have the equivalent expression
\begin{equation} \label{eq:mt-equivalent}
 \mt(\Theta)
 =\inf_{\substack{0\ne M\in\mathbb F_q[t]\\ k\geq0}}
 |M|\,|\langle Mt^k\Theta\rangle|.
\end{equation}
Allowing polynomials $M$ divisible by $t$ does not change the
infimum: such a factor can be transferred from $M$ to $t^k$, and
deleting it only decreases the first factor.  Thus $\Et$ is exactly
the set of counterexamples to \eqref{eq:intro-tlc}, represented by
their fractional parts.

Let
\begin{equation} \label{eq:t-shift}
 T\Theta=\langle t\Theta\rangle.
\end{equation}
In terms of the coefficient sequence, $T$ is the one-sided left
shift.  For $\Theta\in\Iq$ write
\begin{equation} \label{eq:t-orbit-closure}
 K(\Theta)=\overline{\{T^j\Theta:j\geq0\}}.
\end{equation}

\begin{theorem} \label{thm:t-conditional}
Let $q$ be any prime power.  If $\Theta\in\Et$, then
\begin{equation} \label{eq:t-orbit-positive}
 \Haus^{h_1}(K(\Theta))>0,
 \qquad h_1(r)=\frac1{\log(1/r)}.
\end{equation}
Moreover, if $\Et$ is non-empty, then
\begin{equation} \label{eq:t-exceptional-infinite}
 \Haus^{h_1}(\Et)=\infty.
\end{equation}
In particular, $\dim_{\log}\Et\geq1$ and $\Et$ has cardinality
continuum.
\end{theorem}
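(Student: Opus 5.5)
The plan follows the $p$-adic argument of Section~\ref{proof1}, transported to the shift $T$ on $\Iq$, and then adds a step to pass from positivity to infinity.

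\textbf{Orbit-closure bound.} First, for $\Theta\in\Et$ we set $\beta=\mt(\Theta)>0$; since $\mt$ is already an infimum over all $Q$, there is no finite-set issue. Using \eqref{eq:mt-equivalent}, for every $j\geq0$ and every $M\ne0$ with $M(0)\ne0$ we have
\[
|M|\,|\langle Mt^k T^j\Theta\rangle|=|M|\,|\langle Mt^{k+j}\Theta\rangle|\geq\beta,
\]
because $\langle M t^k\langle t^j\Theta\rangle\rangle=\langle Mt^{k+j}\Theta\rangle$ (polynomial parts are discarded). Since $\Phi\mapsto|\langle M\Phi\rangle|$ is continuous, this passes to the closure, so $\mt(\Phi)\geq\beta$ for all $\Phi\in K(\Theta)$. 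In particular $K(\Theta)\subseteq\Et$.

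\textbf{Return times.} Next we prove the return-time analogue of Lemma~\ref{lemma_return_times}. Take $\Phi\in K(\Theta)$ with $\Phi$ and $T^d\Phi$ agreeing in their first $n$ coefficients, and use $M=t^d-1$, so that $M(0)\ne0$ and $|M|=q^d$. Then $\langle(t^d-1)\Phi\rangle=T^d\Phi-\Phi$ has absolute value at most $q^{-(n+1)}$, which gives $\beta\leq q^{d-n-1}$, i.e.\ $n\leq d-1+\log_q(1/\beta)$. With $C=\lceil\log_q(1/\beta)\rceil$, the sets $[w],T^{-1}[w],\dots,T^{-(n-C)}[w]$ (relative to $K(\Theta)$) are pairwise disjoint, exactly as in \eqref{eq:disjoint-preimages}. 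We then take a $T$-invariant probability measure $\mu$ on $K(\Theta)$ as a weak limit of empirical measures; $K(\Theta)$ is compact and $T$ is continuous, so this works as before. Invariance gives $\mu([w])\leq 1/(n-C+1)$. Every ball of diameter $r$ with $q^{-(n+1)}\leq r$ is a single cylinder of order about $\log_q(1/r)$; since the metric is an ultrametric, any set $U$ is contained in one such ball. Hence $\mu(U)\ll h_1(\operatorname{diam}U)$, and the mass distribution principle yields \eqref{eq:t-orbit-positive}.

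\textbf{Infinite measure.} This step is the main obstacle, since a single orbit closure only gives a positive lower bound and $\Et$ must be shown to contain more. My first attempt is to use symmetries that preserve $\Et$ while moving $K(\Theta)$ to disjoint copies. The maps $\Phi\mapsto\langle R\Phi\rangle$ for a polynomial $R$ with $R(0)\ne0$ should send $\Et$ into itself with a constant loss: $|M|\,|\langle M\langle R\Phi\rangle\rangle|=|M|\,|\langle MR\Phi\rangle|\geq\beta/|R|$. Combined with the similar maps $\Phi\mapsto\langle\Phi/\ldots\rangle$, or simply the map $\Phi\mapsto\langle t^{-m}\Phi\rangle$ with prepended digits, this produces many images. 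Concretely, for $a\in\mathbb F_q^*$ and a prefix word $u$ of length $m$, the set $\{u\Phi:\Phi\in K(\Theta)\}$ lies in $\Et$, because
\[
u\Phi=\langle P/t^m\rangle+t^{-m}\Phi ,
\]
and multiplying by $t^m$ reduces the estimate to $\Phi$ up to a factor depending on $m$ and $P$; I expect this reduction to need care. These copies, for distinct $u$ of length $m$, are disjoint and together give $q^m$ copies. Each copy is $K(\Theta)$ scaled by $q^{-m}$, and under scaling $h_1(q^{-m}r)\approx h_1(r)$ as $r\to0$. Each copy therefore has $\Haus^{h_1}$-measure at least about $\Haus^{h_1}(K(\Theta))>0$, and summing over $q^m$ copies with $m\to\infty$ gives $\Haus^{h_1}(\Et)=\infty$. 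The remaining claims follow as in Corollaries~\ref{cor_cardinal_cont} and~\ref{cor_dimlog_bound}.
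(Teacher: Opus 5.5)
Your proposal follows the paper's proof essentially step for step. It uses shift-invariance of $m_t\ge c$ and passage to the closure, the return-time bound from the polynomial $t^d-1$, a $T$-invariant weak limit $\mu$ with $\mu([w])\ll 1/n$, and the mass distribution principle. It then takes $q^m$ pairwise disjoint prefixed copies of $K(\Theta)$ of ratio $q^{-m}$ and uses $h_1(q^{-m}r)/h_1(r)\to1$.

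The one step you leave open (``I expect this reduction to need care'') is that the prefixed copies lie in $E_q^{(t)}$. This is the paper's \eqref{eq:prefix-stability}, and it only needs the fact that multiplying by $t^{j}$ inflates absolute values by at most $q^{j}$.

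Write $u\Phi=P/t^m+t^{-m}\Phi$ with $\deg P<m$, and let $M\ne0$, $k\ge0$.
\begin{itemize}
\item If $k\ge m$, then $\langle Mt^ku\Phi\rangle=\langle Mt^{k-m}\Phi\rangle$, since $Mt^{k-m}P$ is a polynomial.
\item If $k<m$, put $z=\langle Mt^ku\Phi\rangle$. Then $\langle t^{m-k}z\rangle=\langle M(P+\Phi)\rangle=\langle M\Phi\rangle$ and $|\langle t^{m-k}z\rangle|\le q^{m-k}|z|$, so $|M|\,|z|\ge q^{-m}|M|\,|\langle M\Phi\rangle|$.
\end{itemize}
Hence $m_t(u\Phi)\ge q^{-m}m_t(\Phi)>0$. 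The loss does not depend on $P$, and in any case only positivity matters for membership in $E_q^{(t)}$.

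With this filled in, your argument is complete. The detours through $\Phi\mapsto\langle R\Phi\rangle$ and the stray parameter $a\in\mathbb F_q^*$ play no role and can be dropped.
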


\begin{proof}[Proof of Theorem~\ref{thm:t-conditional}]
Put $c=\mt(\Theta)>0$.  The first observation is that
\begin{equation} \label{eq:t-shift-stability}
 \mt(T^j\Theta)\geq c\qquad(j\geq0).
\end{equation}
Indeed, $t^j\Theta-T^j\Theta$ is a polynomial, and hence
\[
 \langle Mt^kT^j\Theta\rangle
 =\langle Mt^{k+j}\Theta\rangle.
\]
The set $\{\Phi:\mt(\Phi)\geq c\}$ is closed, since it is an
intersection of closed conditions indexed by $M$ and $k$.  It
follows that
\begin{equation} \label{eq:t-closure-stability}
 K(\Theta)\subseteq\{\Phi\in\Iq:\mt(\Phi)\geq c\}.
\end{equation}

We next prove a lower bound for return times.  Suppose that $i<j$
and that $T^i\Theta$ and $T^j\Theta$ have the same first $n$
coefficients.  Taking $Q=t^j-t^i=t^i(t^{j-i}-1)$ in
\eqref{eq:mt-definition}, we obtain
\begin{align} \label{eq:t-return-computation}
 c
 &\leq |t^j-t^i|\,|t^j-t^i|_t
       |\langle(t^j-t^i)\Theta\rangle| \notag\\
 &=q^{j-i}|T^j\Theta-T^i\Theta|\\
 &\leq q^{j-i-n-1}. \notag
\end{align}
Consequently,
\begin{equation} \label{eq:t-return-time}
 j-i\geq n+1-\log_q(1/c).
\end{equation}

Consider the empirical measures
\[
 \mu_N=\frac1N\sum_{j=0}^{N-1}\delta_{T^j\Theta}
\]
and let $\mu$ be a weak limit point.  As in
\eqref{eq:asymptotic-invariance}, $\mu$ is $T$-invariant and
supported on $K(\Theta)$.  Let $[w]$ be a cylinder determined by a
word of length $n$. By \eqref{eq:t-return-time}, two visits of the
orbit to $[w]$ are separated by at least $n-C$ iterates, where
\begin{equation} \label{eq:t-return-constant}
 C=\left\lceil\log_q(1/c)\right\rceil.
\end{equation}
It follows either by counting visits in $\mu_N$, or by using the
disjoint preimages as in \eqref{eq:disjoint-preimages}, that
\begin{equation} \label{eq:t-cylinder-measure}
 \mu([w])\leq\frac1{n-C}\qquad(n>C).
\end{equation}
Every set $U\subseteq\Iq$ of sufficiently small positive diameter is
contained in a cylinder whose length is
$\log_q(1/\operatorname{diam}U)+O(1)$.  Hence
\begin{equation} \label{eq:t-frostman}
 \mu(U)\ll_{q,c}\frac1{\log(1/\operatorname{diam}U)}.
\end{equation}
The mass distribution principle then gives
\eqref{eq:t-orbit-positive}.

It remains to prove the stronger statement for the whole exceptional
set. For $a\in\mathbb F_q$ define the inverse branch
\begin{equation} \label{eq:inverse-branch}
 S_a(\Phi)=at^{-1}+t^{-1}\Phi.
\end{equation}
We claim that
\begin{equation} \label{eq:prefix-stability}
 \mt(S_a\Phi)\geq q^{-1}\mt(\Phi).
\end{equation}
For $k\geq1$ this follows from
\[
 \langle Mt^kS_a\Phi\rangle
 =\langle Mt^{k-1}\Phi\rangle.
\]
For $k=0$, put $z=\langle MS_a\Phi\rangle$.  Since $\langle
tz\rangle=\langle M\Phi\rangle$ and $|\langle tz\rangle|\leq q|z|$,
the same conclusion follows with the factor $q^{-1}$.  Iterating
\eqref{eq:prefix-stability}, every word $w\in\mathbb F_q^m$
determines a similarity $S_w$ of ratio $q^{-m}$ such that
\begin{equation} \label{eq:prefixed-copies}
 S_wK(\Theta)\subseteq\Et.
\end{equation}
For fixed $m$ these $q^m$ compact sets lie in distinct cylinders and
are therefore pairwise disjoint.  Moreover,
\begin{equation} \label{eq:slow-variation-h1}
 \frac{h_1(q^{-m}r)}{h_1(r)}\longrightarrow1
 \qquad(r\to0).
\end{equation}
It follows directly from the definition of the Hausdorff measure
that $\Haus^{h_1}(S_wK(\Theta))=\Haus^{h_1}(K(\Theta))$.  Therefore
\[
 \Haus^{h_1}(\Et)
 \geq q^m\Haus^{h_1}(K(\Theta))
\]
for every $m$, proving \eqref{eq:t-exceptional-infinite}.  The
remaining claims follow from identical to
Corollary~\ref{cor_cardinal_cont} and
Corollary~\ref{cor_dimlog_bound} proofs arguments.
\end{proof}

%
%

\subsection{A stronger unconditional result in odd characteristic}

We now exploit the explicit construction of Lai and Sprang~
\cite{LaiSprang2026} available in odd characteristic. Put
\begin{equation} \label{eq:d-and-Z}
 d=\frac{q-1}{2},
 \qquad
 Z=\{\zeta\in\mathbb F_q^\times:\zeta^d=-1\}.
\end{equation}
Thus $Z$ is the set of quadratic non-residues in $\mathbb F_q$ and
$|Z|=d$.  Define
\begin{equation} \label{eq:Aq}
 B=(q-1)^d,
 \qquad
 A_q=\log_2B=d\log_2(q-1)
      =\frac{q-1}{2}\log_2(q-1).
\end{equation}

For a collection of parameters
\begin{equation} \label{eq:parameter-space}
 \mathbf a=(a_{k,\zeta})_{k\geq0,\,\zeta\in Z},
 \qquad a_{k,\zeta}\in\mathbb F_q^\times,
\end{equation}
let
\begin{equation} \label{eq:parametrised-lambda}
 \Lambda_{\mathbf a}(t)
 =\sum_{k=0}^{\infty}\sum_{\zeta\in Z}
 a_{k,\zeta}\frac{t^{2^k}}{t^{2^{k+1}}-\zeta}
 \in\Iq.
\end{equation}
The series converges because every summand in the $k$th block has
absolute value $q^{-2^k}$.  Let
\begin{equation} \label{eq:Cq-family}
 \mathcal C_q=\{\Lambda_{\mathbf a}:\mathbf a
 \text{ satisfies }\eqref{eq:parameter-space}\}.
\end{equation}

\begin{theorem} \label{thm:t-odd-strong}
Let $q$ be an odd prime power.  Then
\begin{equation} \label{eq:family-exceptional}
 \mathcal C_q\subseteq
 \{\Theta\in\Iq:\mt(\Theta)\geq q^{-2(q-1)}\}
 \subseteq\Et.
\end{equation}
Furthermore,
\begin{equation} \label{eq:critical-family-measure}
 0<\Haus^{h_{A_q}}(\mathcal C_q)<\infty,
 \qquad
 h_{A_q}(r)=\frac1{(\log(1/r))^{A_q}},
\end{equation}
and
\begin{equation} \label{eq:odd-infinite-measure}
 \Haus^{h_A}(\Et)=\infty
 \qquad(0<A\leq A_q).
\end{equation}
In particular, $\dim_{\log}\Et\geq A_q$.
\end{theorem}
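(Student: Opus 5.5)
The plan is to prove the three assertions of Theorem~\ref{thm:t-odd-strong} in order: the Diophantine inclusion \eqref{eq:family-exceptional}, then the two-sided measure estimate \eqref{eq:critical-family-measure} for $\mathcal C_q$ as a Cantor-type set, and finally \eqref{eq:odd-infinite-measure} by the prefix-copy argument from the proof of Theorem~\ref{thm:t-conditional}.

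\emph{Step 1: the inclusion \eqref{eq:family-exceptional}.} The second inclusion is immediate from \eqref{eq:t-exceptional-set}. For the first, I will go through the Lai--Sprang proof for their explicit series and check that it uses only two facts about the coefficients: $a_{k,\zeta}\neq0$, and the block structure $t^{2^k}/(t^{2^{k+1}}-\zeta)$ with $\zeta$ ranging over $Z$. Their argument controls $|\langle Mt^k\Lambda\rangle|$ for all $M$ and $k$, via \eqref{eq:mt-equivalent}, through the non-vanishing of certain Hankel-type determinants of the coefficient sequence, or equivalently through a lower bound on approximations by rationals. I expect the key nonvanishing to come from a Vandermonde-type structure in the $\zeta\in Z$ multiplied by the nonzero $a_{k,\zeta}$. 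If so, the bound $\mt\geq q^{-2(q-1)}$ should hold uniformly in $\mathbf a$. I regard this as the main obstacle: one must verify that every identity in their proof survives replacing their specific coefficients by arbitrary elements of $\mathbb F_q^\times$. If some step does depend on the particular values, I would first try to restrict $\mathbf a$ to a subfamily that still has $(q-1)^d$ choices per block.

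\emph{Step 2: the estimate \eqref{eq:critical-family-measure}.} Write $\Lambda_{\mathbf a}=\sum_k\beta_k$, where $\beta_k$ is the $k$th block. The block $\beta_k$ has $|\beta_k|=q^{-2^k}$ and depends on $B=(q-1)^d$ choices of parameters. Fixing blocks $0,\dots,k-1$ determines a \emph{level-$k$ piece}. There are $B^k$ such pieces, each of diameter at most $q^{-2^k}$, and
\[
B^k\,h_{A_q}(q^{-2^k})=B^k(2^k\log q)^{-A_q}=(\log q)^{-A_q}
\]
because $2^{A_q}=B$. Covering $\mathcal C_q$ by these pieces gives the upper bound $\Haus^{h_{A_q}}(\mathcal C_q)<\infty$.

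For the lower bound I need separation between pieces. Two choices $\mathbf a,\mathbf a'$ that first differ in block $k$ give a difference of $\beta_k$-terms equal to
\[
\sum_{\zeta}(a_{k,\zeta}-a'_{k,\zeta})\sum_{j\ge0}\zeta^j t^{-2^k(2j+1)}.
\]
The vectors $(\zeta^j)_{\zeta\in Z}$, $0\le j<d$, form an invertible Vandermonde system, so some $j<d$ has a nonzero coefficient. The blocks $\beta_{k'}$ with $k'>k$ involve only exponents divisible by $2^{k+1}$, while these exponents are odd multiples of $2^k$, so no cancellation occurs. Hence distinct level-$(k+1)$ pieces are at distance at least $q^{-(2d-1)2^k}$.

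Let $\mu$ be the measure giving mass $B^{-k}$ to each level-$k$ piece. Take a set $U$ of small diameter $r$, and choose $k$ maximal such that $q^{-(2d-1)2^{k-1}}>r$. Then $U$ meets at most one level-$k$ piece, so $\mu(U)\le B^{-k}$. Since $\log(1/r)\asymp_q 2^k$, we have $B^{-k}=2^{-kA_q}\ll h_{A_q}(r)$. The mass distribution principle then gives $\Haus^{h_{A_q}}(\mathcal C_q)>0$.

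\emph{Step 3: the estimate \eqref{eq:odd-infinite-measure}.} By \eqref{eq:prefix-stability}, each $S_w\mathcal C_q$ with $w\in\mathbb F_q^m$ lies in $\{\mt\ge q^{-m-2(q-1)}\}\subseteq\Et$. These $q^m$ sets are pairwise disjoint. Since $h_{A_q}(q^{-m}r)/h_{A_q}(r)\to1$, each copy has the same $h_{A_q}$-measure as $\mathcal C_q$, which is positive. Hence $\Haus^{h_{A_q}}(\Et)\ge q^m\Haus^{h_{A_q}}(\mathcal C_q)\to\infty$. For $A<A_q$ we have $h_A\ge h_{A_q}$ for small $r$, which gives the claim for all $0<A\le A_q$, and hence $\dim_{\log}\Et\ge A_q$.
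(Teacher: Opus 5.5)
Your Steps~2 and~3 are fine and essentially match the paper. The cover by $B^K$ pieces of diameter $q^{-2^K}$ is the paper's upper bound. Your separation $q^{-(2d-1)2^k}$ between distinct level-$(k+1)$ pieces is an equivalent form of the paper's observation that a cylinder of length $N$ determines every block with $(2d-1)2^k\le N$. The prefix-copy argument is the same.

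The genuine gap is Step~1. It carries the whole Diophantine content of the theorem, and Step~3 rests on it because it needs $\mathcal C_q\subseteq\Et$. You never show that $\mt(\Lambda_{\mathbf a})\geq q^{-2(q-1)}$ for every $\mathbf a$. You propose to audit another paper's proof and say you ``expect'' it to be uniform in $\mathbf a$, but the constant $q^{-2(q-1)}$ is not derived anywhere. Your fallback does not help either. The theorem asserts the inclusion for all of $\mathcal C_q$, and your own metric computation needs all $(q-1)^d$ choices available independently at every scale, which is $\mathcal C_q$ itself.

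The missing idea is the descent in the Uniform Counterexample Lemma. It also shows that uniformity in $\mathbf a$ is structural, not a bookkeeping check on a one-series proof.

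\begin{itemize}
\item The functional equation $\Lambda_{\mathbf a}(t)=\Lambda_{\sigma\mathbf a}(t^2)+\sum_{\zeta\in Z}a_{0,\zeta}\,t/(t^2-\zeta)$ turns a violation for $\Lambda_{\mathbf a}$ into one for $\Lambda_{\sigma\mathbf a}$. So you must start from a violation $|Q|\,|\langle Qt^m\Lambda_{\mathbf a}\rangle|<1$ with $D=t^{q-1}-1$ dividing $Q$ and $\deg Q$ minimal over all $\mathbf a$ and $m$ at once.
\item Write $Q=Q_0(t^2)+tQ_1(t^2)$ with $Q_i=(t^d-1)\widetilde Q_i$. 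This gives two descent identities for $\Lambda_{\sigma\mathbf a}$ with rational error terms $\sum_{\zeta}a_{0,\zeta}(\cdots)/(t-\zeta)$.
\item Combining them, the scaled Vandermonde matrix $(a_{0,\zeta}\zeta^{n+\delta+j-1})$ is invertible exactly because $a_{0,\zeta}\neq0$; here your guess is right. This forces $\widetilde Q_0(\zeta)^2=\zeta\widetilde Q_1(\zeta)^2$ for $\zeta\in Z$.
\item What your plan does not identify is how the choice of $Z$ enters. Every $\zeta\in Z$ is a quadratic non-residue, so both values vanish. Then $t^d+1$ divides $\widetilde Q_0$ and $\widetilde Q_1$, and the rational terms disappear.
\item Now $Q_\varepsilon$ is still divisible by $D$, has smaller degree, and is a violation for $\Lambda_{\sigma\mathbf a}$. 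This contradicts minimality.
\item The constant comes from taking $Q=DM$. Since $\deg D=2d=q-1$, a value $|M|\,|\langle Mt^m\Lambda_{\mathbf a}\rangle|<q^{-4d}=q^{-2(q-1)}$ would produce such a violation.
\end{itemize}

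Without this argument, or an equivalent one actually carried out, the first inclusion of the theorem is unproved. So is the statement $\Haus^{h_A}(\Et)=\infty$ that depends on it.
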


\begin{remark}
For $q=3$ the exponent in \eqref{eq:Aq} is $A_3=1$, while it is
strictly larger than one for every odd $q\geq5$.  For example,
\[
 A_5=4,\qquad A_7=3\log_2 6,\qquad A_9=12.
\]
The exponent one in Theorem~\ref{thm:t-conditional} comes from the
linear separation of return times in a single orbit.  The larger
exponent in Theorem~\ref{thm:t-odd-strong} comes from the
$B=(q-1)^d$ independent choices available at every dyadic scale in
\eqref{eq:parametrised-lambda}.
\end{remark}

We split the proof into the Diophantine and metric parts.  Write
$\sigma\mathbf a=(a_{k+1,\zeta})_{k\geq0,\zeta\in Z}$.  The series
in \eqref{eq:parametrised-lambda} satisfies the functional equation
\begin{equation} \label{eq:parametrised-functional-equation}
 \Lambda_{\mathbf a}(t)
 =\Lambda_{\sigma\mathbf a}(t^2)
  +\sum_{\zeta\in Z}a_{0,\zeta}\frac{t}{t^2-\zeta}.
\end{equation}

\begin{lemma}[Uniform Counterexample Lemma]\label{un_contrex_lemma_tadic}
For every $\mathbf a$ satisfying \eqref{eq:parameter-space},
\begin{equation} \label{eq:uniform-parametrised-bound}
 \inf_{\substack{0\ne M\in\mathbb F_q[t]\\m\geq0}}
 |M|\,|\langle Mt^m\Lambda_{\mathbf a}\rangle|
 \geq q^{-2(q-1)}.
\end{equation}
\end{lemma}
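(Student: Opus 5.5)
The plan is to follow the strategy of Lai and Sprang~\cite{LaiSprang2026}, keeping track of two things: the argument should use only $a_{k,\zeta}\neq0$, and the constant should not depend on $\mathbf a$. I would work with the natural rational approximations. Since $\prod_{\zeta\in Z}(X-\zeta)=X^d+1$, the truncation of \eqref{eq:parametrised-lambda} at level $K$ is $P_K/D_K$, where
\[
 D_K=\prod_{k<K}\bigl(t^{2^{k+1}d}+1\bigr)
\]
and $|P_K/D_K-\Lambda_{\mathbf a}|=q^{-2^K}$. The functional equation \eqref{eq:parametrised-functional-equation} relates $\Lambda_{\mathbf a}$ to $\Lambda_{\sigma\mathbf a}(t^2)$ plus an explicit rational function with denominator $t^{2d}+1$. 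The aim is a self-similar induction on the scale $\deg M+m$ that holds simultaneously for all $\mathbf a$. Simultaneity matters because the inductive step passes from $\mathbf a$ to $\sigma\mathbf a$, so the hypothesis must be uniform over the whole parameter space \eqref{eq:parameter-space}.

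\emph{Reduction to small scales.} Suppose $M$ and $m$ give $|M|\,|\langle Mt^m\Lambda_{\mathbf a}\rangle|<q^{-2(q-1)}$. I would multiply by $t^{2d}+1$ to clear the level-$0$ term in \eqref{eq:parametrised-functional-equation}. The polynomial $M'=(t^{2d}+1)M$ then satisfies
\[
 |M'|\,\bigl|\langle M't^m\Lambda_{\sigma\mathbf a}(t^2)\rangle\bigr|<q^{2d-2(q-1)}.
\]
Next I would split $M'$ and $t^m$ into even and odd parts in $t$. Because $\Lambda_{\sigma\mathbf a}(t^2)$ involves only even powers of $t$, the fractional part separates into the two parity classes, with no cancellation between them. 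At least one class, after the substitution $t^2\mapsto t$, gives a polynomial $M''$ and an exponent $m''$ of roughly half the size. For these,
\[
 |M''|\,|\langle M''t^{m''}\Lambda_{\sigma\mathbf a}\rangle|
\]
is still small, and the loss in the constant is controlled because degrees halve. Iterating, the scale drops until $\deg M+m$ is bounded by an absolute constant depending only on $q$.

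\emph{Base case.} At bounded scale, only finitely many blocks $k$ of \eqref{eq:parametrised-lambda} matter. I would argue by partial fractions: $\langle Mt^m\Lambda_{\mathbf a}\rangle$ is the fractional part of a rational function with denominator dividing $D_K$. Its residues at the roots $\pm\sqrt\zeta$ are nonzero multiples of the $a_{k,\zeta}$, since $t^{2d}+1$ has distinct roots ($q$ is odd), so $M$ of small degree cannot kill enough of them. This gives $|\langle Mt^m\Lambda_{\mathbf a}\rangle|\geq q^{-\deg M-2(q-1)}$, which is the source of the exponent $2(q-1)=4d$, namely two levels' worth of denominator degree.

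\emph{Main obstacle.} I expect the hard step to be the parity-splitting in the reduction. One has to show that the even/odd decomposition really passes the smallness to a \emph{nonzero} polynomial at half the scale without compounding the constant. If it compounds, the bound degrades by a factor at each halving and the induction fails. If this direct approach is too lossy, my fallback is the number-wall criterion of Adiceam, Nesharim and Lunnon~\cite{AdiceamNesharimLunnon2021}. By that criterion, \eqref{eq:uniform-parametrised-bound} is equivalent to the number wall of the coefficient sequence of $\Lambda_{\mathbf a}$ having no window of size at least $2(q-1)$. I would then verify, using the self-similar structure coming from \eqref{eq:parametrised-functional-equation}, that the Lai--Sprang window bound depends only on the nonvanishing of the $a_{k,\zeta}$.
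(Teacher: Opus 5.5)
There is a genuine gap, and it is exactly the one you flag as the main obstacle: the reduction step, as described, cannot preserve any useful constant.

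\textbf{Why your reduction fails.} Multiplying by $t^{2d}+1$ costs $q^{2d}$ in $|M|$. It also costs up to another $q^{2d}$ in the fractional part, since $|\langle (t^{2d}+1)z\rangle|\le q^{2d}|z|$. So your displayed bound should read $q^{4d-2(q-1)}=1$, not $q^{2d-2(q-1)}$. The parity split, on the other hand, only takes a square root. If $P$ is the product and $M''$ the top-degree parity component, one gets $P_{\mathrm{new}}\le (q^{4d}P)^{1/2}$. Note that $M''\neq0$ is automatic, so non-vanishing is not the issue.

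The map $c\mapsto (q^{4d}c)^{1/2}$ preserves a bound $P<c$ only when $c\ge q^{4d}$. Starting from $P<q^{-4d}$, you get $P<1$ after one step and $P<q^{2d}$ after two. The latter is already satisfied by $M=1$, so no base case can ever be contradicted. Likewise, if you take the lemma itself as inductive hypothesis, the reduced pair only satisfies $P_{\mathrm{new}}<1$, which does not contradict $P_{\mathrm{new}}\ge q^{-4d}$.

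The base-case estimate $|\langle Mt^m\Lambda_{\mathbf a}\rangle|\ge q^{-\deg M-2(q-1)}$ is asserted rather than derived, and it would have to hold uniformly in $\mathbf a$ and $m$. The number-wall fallback only restates the task as checking that the Lai--Sprang argument is uniform in $\mathbf a$.

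\textbf{The missing idea.} You need to pay the factor $q^{4d}$ once and then run a lossless descent at threshold exactly $1$, which is the threshold that survives the square root. The paper proceeds as follows.
\begin{itemize}
\item Set $D=t^{q-1}-1=t^{2d}-1$ and $Q=DM$. Note that $D$ is coprime to the level denominators $t^{2d}+1$, so it does not clear them.
\item Take a counterexample to ``$D\mid Q$ and $|Q|\,|\langle Qt^m\Lambda_{\mathbf a}\rangle|<1$'' of minimal degree over all $\mathbf a$ and $m$.
\item Show that the level-$0$ rational terms vanish, instead of clearing them by multiplication. Writing $Q_i=(t^d-1)\widetilde Q_i$ gives $Q_i(\zeta)=-2\widetilde Q_i(\zeta)$ on $Z$.
\item Multiply the two parity identities by $\widetilde Q_{1-\delta}$ and $t^\delta\widetilde Q_\delta$ and subtract. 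This leaves a $d\times d$ Vandermonde system forcing $\widetilde Q_0(\zeta)^2=\zeta\widetilde Q_1(\zeta)^2$.
\item Since each $\zeta\in Z$ is a quadratic non-residue, both values vanish. Hence $t^d+1$ divides $\widetilde Q_0$ and $\widetilde Q_1$, so $D\mid Q_\varepsilon$ and the rational terms drop out.
\item Then $Q_\varepsilon$ is a strictly smaller counterexample for $\sigma\mathbf a$, contradicting minimality. No base case is needed, and $q^{-2(q-1)}$ enters only through $Q=DM$.
\end{itemize}
Your plan never uses the non-residue property beyond $\prod_{\zeta}(X-\zeta)=X^d+1$. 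That arithmetic input is exactly what makes the descent free of loss.
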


\begin{proof}[Proof of Lemma~\ref{un_contrex_lemma_tadic}]
Put
\begin{equation} \label{eq:D-polynomial}
 D(t)=t^{q-1}-1=t^{2d}-1=(t^d-1)(t^d+1).
\end{equation}
We first prove that there are no $\mathbf a$, $m\geq0$ and non-zero
$Q\in\mathbb F_q[t]$ such that
\begin{equation} \label{eq:key-violation}
 D\mid Q
 \qquad\text{and}\qquad
 |Q|\,|\langle Qt^m\Lambda_{\mathbf a}\rangle|<1.
\end{equation}
Suppose otherwise, and choose such a triple $(\mathbf a, m, Q)$ with
$\deg Q$ minimal among all parameter collections $\mathbf a$. Write
\begin{equation} \label{eq:Q-parity-decomposition}
 Q(t)=D(t)\widetilde Q(t),
 \qquad
 \widetilde Q(t)=\widetilde Q_0(t^2)+t\widetilde Q_1(t^2),
\end{equation}
and set
\begin{equation} \label{eq:Qi-definition}
 Q_i(t)=(t^d-1)\widetilde Q_i(t),\qquad i=0,1.
\end{equation}
Then
\begin{equation} \label{eq:Q-even-odd}
 Q(t)=Q_0(t^2)+tQ_1(t^2).
\end{equation}
Write
\begin{equation} \label{eq:degree-parities}
 \deg Q=2s+\varepsilon,
 \qquad
 m=2n+\delta,
 \qquad \varepsilon,\delta\in\{0,1\}.
\end{equation}
Since $D\mid Q$, we have $s\geq d$.  Moreover,
\begin{align} \label{eq:parity-degrees}
 \deg Q_\varepsilon&=s,
 &\deg\widetilde Q_\varepsilon&=s-d,\notag\\
 \deg Q_{1-\varepsilon}&\leq s-1+\varepsilon,
 &\deg\widetilde Q_{1-\varepsilon}&\leq s-d-1+\varepsilon.
\end{align}
In particular, $Q_\varepsilon\ne0$ and $\deg Q_\varepsilon<\deg Q$.

One can easily verify that
$$
 \frac{t^{2n+1+\delta}}{t^2-\zeta} -
 \frac{\zeta^{n+\delta}t^{1-\delta}}{t^2-\zeta} \in \mathbb F_q[t],
$$
therefore
\[
 \left\langle
 \frac{t^{2n+1+\delta}}{t^2-\zeta}
 \right\rangle
 =
 \frac{\zeta^{n+\delta}t^{1-\delta}}{t^2-\zeta}.
\]
We apply this formula to get
$$
\left\langle \frac{(Q_0(t^2) + tQ_1(t^2))t}{t^2-\zeta}\right\rangle
= \frac{tQ_0(\zeta) + \zeta Q_1(\zeta)}{t^2-\zeta}.
$$
By using this equation and separating the even and odd powers of
$t$, we obtain
\begin{align*}
 \left\langle Qt^m\Lambda_{\mathbf a}\right\rangle
 &=
 \left\langle
 Q_\delta(t^2)t^{2n+2\delta}
 \Lambda_{\sigma\mathbf a}(t^2)
 +
 \sum_{\zeta\in Z}
 a_{0,\zeta}
 \frac{\zeta^{n+1}Q_{1-\delta}(\zeta)}
      {t^2-\zeta}
 \right\rangle
 \\
 &\quad
 +t\left\langle
 Q_{1-\delta}(t^2)t^{2n}
 \Lambda_{\sigma\mathbf a}(t^2)
 +
 \sum_{\zeta\in Z}
 a_{0,\zeta}
 \frac{\zeta^{n+\delta}Q_\delta(\zeta)}
      {t^2-\zeta}
 \right\rangle .
\end{align*}


For a Laurent series $R$, we write $R=O(t^u)$ if $\deg R\leq u$. The
strict inequality in \eqref{eq:key-violation}, together with the
discreteness of the absolute value, implies that $\langle
Qt^m\Lambda_{\mathbf a}\rangle =O(t^{-\deg Q-1})$. Consequently,
\begin{align}
 \left\langle
 Q_\delta(t)t^{n+\delta}\Lambda_{\sigma\mathbf a}(t)
 +
 \sum_{\zeta\in Z}
 a_{0,\zeta}
 \frac{\zeta^{n+1}Q_{1-\delta}(\zeta)}
      {t-\zeta}
 \right\rangle
 &=O(t^{-s-1}),
 \label{eq:descent-first}
 \\
 \left\langle
 Q_{1-\delta}(t)t^n\Lambda_{\sigma\mathbf a}(t)
 +
 \sum_{\zeta\in Z}
 a_{0,\zeta}
 \frac{\zeta^{n+\delta}Q_\delta(\zeta)}
      {t-\zeta}
 \right\rangle
 &=O(t^{-s-1-\varepsilon}).
 \label{eq:descent-second}
\end{align}
These are the two basic descent identities.

For $\zeta\in Z$, equations \eqref{eq:d-and-Z} and
\eqref{eq:Qi-definition} give
\begin{equation} \label{eq:Qi-at-zeta}
 Q_i(\zeta)=-2\widetilde Q_i(\zeta).
\end{equation}
The degree bounds in \eqref{eq:parity-degrees} imply
\[
 \deg\widetilde Q_{1-\delta}\leq s-d,
 \qquad
 \deg\bigl(t^\delta\widetilde Q_\delta\bigr)
 \leq s-d+\varepsilon\delta.
\]
Hence, one can easily check that after multiplying
\eqref{eq:descent-first} by $\widetilde Q_{1-\delta}(t)$
and multiplying \eqref{eq:descent-second} by $t^\delta\widetilde
Q_\delta(t)$ both expressions are of order $O(t^{-d-1})$.

After each multiplication we take fractional parts.  We use the identity
\[
 \left\langle\frac{P(t)}{t-\zeta}\right\rangle
 =
 \frac{P(\zeta)}{t-\zeta},
 \qquad P\in\mathbb F_q[t].
\]
Together with \eqref{eq:Qi-at-zeta}, this gives
$$
\left\langle (t^d-1)\widetilde Q_0(t)\widetilde Q_1(t)
 t^{n+\delta}\Lambda_{\sigma\mathbf a}(t) - 2 \sum_{\zeta\in Z}
 a_{0,\zeta}
 \frac{\zeta^{n+1}\widetilde Q_{1-\delta}(\zeta)^2}
      {t-\zeta}\right\rangle = O(t^{-d-1}),
$$
$$
\left\langle (t^d-1)\widetilde Q_0(t)\widetilde Q_1(t)
 t^{n+\delta}\Lambda_{\sigma\mathbf a}(t) - 2\sum_{\zeta\in Z}
 a_{0,\zeta}
 \frac{\zeta^{n+2\delta}\widetilde Q_\delta(\zeta)^2}
      {t-\zeta}
 \right\rangle  =O(t^{-d-1}).
$$
%
By subtracting these two relations and dividing by the non-zero
scalar $2(-1)^\delta$, we obtain
\begin{equation} \label{eq:Vandermonde-input}
 \sum_{\zeta\in Z}
 a_{0,\zeta}\zeta^{n+\delta}
 \frac{\widetilde Q_0(\zeta)^2
       -\zeta\widetilde Q_1(\zeta)^2}
      {t-\zeta}
 =O(t^{-d-1}).
\end{equation}
Since
\[
 \frac1{t-\zeta}=\sum_{j\geq1}\zeta^{j-1}t^{-j},
\]
the first $d$ coefficients in \eqref{eq:Vandermonde-input} give
\begin{equation} \label{eq:Vandermonde-system}
 \sum_{\zeta\in Z}
 a_{0,\zeta}\zeta^{n+\delta+j-1}
 \bigl(\widetilde Q_0(\zeta)^2
       -\zeta\widetilde Q_1(\zeta)^2\bigr)=0,
 \qquad 1\leq j\leq d.
\end{equation}
We look at them as a system of linear equations in variables
$\widetilde{Q}_0(\zeta)^2 - \zeta\widetilde{Q}_1(\zeta)^2$ with the
corresponding matrix $(a_{0,\zeta}\zeta^{n+\delta+j-1})_{\zeta\in Z,
1\le j\le d}$. Its determinant equals
$$
\prod_{\zeta\in Z} a_{0,\zeta}\zeta^{n+\delta} \cdot \det
(\zeta^{j-1})_{\zeta\in Z, 1\le j\le d }\neq 0,
$$
because the matrix on the right hand side is the Vandermonde matrix
which is invertible. Therefore it has only a zero solution or in
other words
$$
 \widetilde Q_0(\zeta)^2
 =\zeta\widetilde Q_1(\zeta)^2
 \qquad(\zeta\in Z).
$$
Every $\zeta\in Z$ is a quadratic non-residue. The last equation
therefore forces
\begin{equation} \label{eq:both-vanish}
 \widetilde Q_0(\zeta)=\widetilde Q_1(\zeta)=0
 \qquad(\zeta\in Z).
\end{equation}

The elements of $Z$ are precisely the $d$ roots of $t^d+1$ in
$\mathbb F_q$.  These roots are distinct, since the characteristic of
$\mathbb F_q$ does not divide $d=(q-1)/2$.  Hence
\[
 t^d+1=\prod_{\zeta\in Z}(t-\zeta).
\]
We now deduce from \eqref{eq:both-vanish} that $t^d+1$ divides both
$\widetilde Q_0$ and $\widetilde Q_1$.  Together with
\eqref{eq:Qi-definition}, this gives
\begin{equation} \label{eq:D-divides-Qi}
 D\mid Q_0
 \qquad\text{and}\qquad
 D\mid Q_1.
\end{equation}

If $\varepsilon=\delta$, use \eqref{eq:descent-first}; otherwise use
\eqref{eq:descent-second}.  In either case \eqref{eq:both-vanish}
eliminates the rational sum and gives
\begin{equation} \label{eq:smaller-violation}
 \left|Q_\varepsilon\right|
 \left|\left\langle
 Q_\varepsilon t^{n+\varepsilon\delta}
 \Lambda_{\sigma\mathbf a}\right\rangle\right|<1.
\end{equation}
By \eqref{eq:D-divides-Qi}, the polynomial $Q_\varepsilon$ is
divisible by $D$, whereas \eqref{eq:parity-degrees} gives $0\leq\deg
Q_\varepsilon<\deg Q$.  This contradicts the global minimality of
$\deg Q$.  Thus \eqref{eq:key-violation} is impossible.

Finally, if \eqref{eq:uniform-parametrised-bound} failed, there
would be $M\ne0$ and $m\geq0$ for which the product in that formula
was smaller than $q^{-2(q-1)}=q^{-4d}$.  Taking $Q=DM$,
multiplication by $D$, whose degree is $2d$, can increase each of
the two absolute values by at most $q^{2d}$. We would obtain
\eqref{eq:key-violation}, a contradiction.  This proves the lemma.
\end{proof}

\begin{proof}[Proof of Theorem~\ref{thm:t-odd-strong}]
The Uniform Counterexample Lemma and \eqref{eq:mt-equivalent} prove
\eqref{eq:family-exceptional}.  It remains to estimate the size of
$\mathcal C_q$.

Expanding an individual summand gives
\begin{equation} \label{eq:individual-expansion}
 \frac{t^{2^k}}{t^{2^{k+1}}-\zeta}
 =\sum_{j\geq0}\zeta^j t^{-(2j+1)2^k}.
\end{equation}
Every positive integer $n$ has a unique representation
$n=(2j+1)2^k$. Hence the coefficient of $t^{-(2j+1)2^k}$ in
$\Lambda_{\mathbf a}$ is
\begin{equation} \label{eq:block-coefficients}
 \sum_{\zeta\in Z}a_{k,\zeta}\zeta^j.
\end{equation}
For $j=0,\ldots,d-1$, these $d$ coefficients uniquely determine the
whole vector $(a_{k,\zeta})_{\zeta\in Z}$, again by invertibility of
the Vandermonde matrix.  In particular, the map $\mathbf
a\mapsto\Lambda_{\mathbf a}$ is injective.

Give every parameter $a_{k,\zeta}$ the uniform distribution on
$\mathbb F_q^\times$, independently of all the others, and let $\nu$
be the push-forward of this product probability measure to $\mathcal
C_q$.  There are $B=(q-1)^d$ choices for the vector at each fixed
scale $k$.  If the first $N$ Laurent coefficients are prescribed,
then all parameters $a_{k,\zeta}$ with $\zeta\in Z$ and
\begin{equation} \label{eq:visible-blocks}
 (2d-1)2^k=(q-2)2^k\leq N
\end{equation}
are determined by \eqref{eq:block-coefficients}.  Therefore every
cylinder of length $N$ has $\nu$-measure at most
\begin{equation} \label{eq:parameter-cylinder-bound}
(q-1)^{-d\log_2\left\lfloor\frac{N}{q-1}\right\rfloor}\ll_q
C_qN^{-A_q}.
\end{equation}
Since $N\asymp_q\log(1/r)$ for a cylinder of diameter $r$, every
Borel set $U$ of sufficiently small diameter satisfies
\begin{equation} \label{eq:parameter-frostman}
 \nu(U)\ll_q
 \frac1{(\log(1/\operatorname{diam}U))^{A_q}}.
\end{equation}
The mass distribution principle proves the strict lower bound in
\eqref{eq:critical-family-measure}.

For the upper bound, fix the vectors $(a_{k,\zeta})_{\zeta\in Z}$ at
scales $0\le k\le K-1$. All remaining blocks in
\eqref{eq:parametrised-lambda} start at an index at least $2^K$.
Thus each of the $B^K$ choices of the first $K$ vectors determines a
set of diameter at most $q^{-2^K}$. Consequently,
\begin{equation} \label{eq:critical-cover}
 \Haus^{h_{A_q}}_{q^{-2^K}}(\mathcal C_q)
 \leq B^Kh_{A_q}(q^{-2^K})
 =\frac{B^K}{(2^K\log q)^{A_q}}
 =(\log q)^{-A_q},
\end{equation}
where we used $2^{A_q}=B$.  This proves the upper bound in
\eqref{eq:critical-family-measure}.

Finally, apply the inverse branches \eqref{eq:inverse-branch} to
$\mathcal C_q$.  By \eqref{eq:prefix-stability}, every finite
prefixed copy $S_w\mathcal C_q$ is contained in $\Et$.  For fixed
word length $m$ these copies are pairwise disjoint, and
\begin{equation} \label{eq:slow-variation-hA}
 \frac{h_{A_q}(q^{-m}r)}{h_{A_q}(r)}\longrightarrow1
 \qquad(r\to0).
\end{equation}
Hence each copy has the same positive $h_{A_q}$-Hausdorff measure as
$\mathcal C_q$.  Letting $m$ tend to infinity proves
$\Haus^{h_{A_q}}(\Et)=\infty$.  If $0<A<A_q$, then $h_A(r)\geq
h_{A_q}(r)$ for all sufficiently small $r$, so the same infinite
measure conclusion follows by comparison.  This completes the proof.
\end{proof}


\begin{thebibliography}{99}

\bibitem{AdiceamBadziahin2025}
F.~Adiceam and D.~Badziahin,
\newblock On the $P(t)$-adic Littlewood conjecture in characteristics
$\ell\equiv3\pmod4$,
\newblock preprint, 2025,
\newblock \href{https://arxiv.org/abs/2509.12826}{arXiv:2509.12826}.

\bibitem{AdiceamNesharimLunnon2021}
F.~Adiceam, E.~Nesharim and F.~Lunnon,
\newblock On the $t$-adic Littlewood conjecture,
\newblock \emph{Duke Mathematical Journal} \textbf{170} (2021), no.~10,
2371--2419,
\newblock \href{https://doi.org/10.1215/00127094-2020-0077}
{doi:10.1215/00127094-2020-0077}.

\bibitem{Badziahin2026}
D.~Badziahin,
\newblock Estimating the lower limit in the $p$-adic Littlewood conjecture,
\newblock \emph{Experimental Mathematics} (2026), 1--6,
\newblock \href{https://doi.org/10.1080/10586458.2025.2595008}
{doi:10.1080/10586458.2025.2595008}.

\bibitem{BadziahinBugeaudEinsiedlerKleinbock2015}
D.~Badziahin, Y.~Bugeaud, M.~Einsiedler and D.~Kleinbock,
\newblock On the complexity of a putative counterexample to the $p$-adic
Littlewood conjecture,
\newblock \emph{Compositio Mathematica} \textbf{151} (2015), 1647--1662,
\newblock \href{https://doi.org/10.1112/S0010437X15007393}
{doi:10.1112/S0010437X15007393}.

\bibitem{BeresnevichVelani2006}
V.~Beresnevich and S.~Velani,
\newblock A mass transference principle and the Duffin--Schaeffer conjecture
for Hausdorff measures,
\newblock \emph{Annals of Mathematics} \textbf{164} (2006), 971--992,
\newblock \href{https://doi.org/10.4007/annals.2006.164.971}
{doi:10.4007/annals.2006.164.971}.

\bibitem{BlackmanKristensenNorthey2024}
J.~Blackman, S.~Kristensen and M.~J.~Northey,
\newblock A note on the base-$p$ expansions of putative counterexamples to
the $p$-adic Littlewood conjecture,
\newblock \emph{Expositiones Mathematicae} \textbf{42} (2024), no.~3,
Paper No.~125548, 16 pp.,
\newblock \href{https://doi.org/10.1016/j.exmath.2024.125548}
{doi:10.1016/j.exmath.2024.125548}.

\bibitem{EinsiedlerKatokLindenstrauss2006}
M.~Einsiedler, A.~Katok and E.~Lindenstrauss,
\newblock Invariant measures and the set of exceptions to Littlewood's
conjecture,
\newblock \emph{Annals of Mathematics} \textbf{164} (2006), 513--560,
\newblock \href{https://doi.org/10.4007/annals.2006.164.513}
{doi:10.4007/annals.2006.164.513}.

\bibitem{EinsiedlerKleinbock2007}
M.~Einsiedler and D.~Kleinbock,
\newblock Measure rigidity and $p$-adic Littlewood-type problems,
\newblock \emph{Compositio Mathematica} \textbf{143} (2007), 689--702,
\newblock \href{https://doi.org/10.1112/S0010437X07002801}
{doi:10.1112/S0010437X07002801}.

\bibitem{GarrettRobertson2026}
S.~Garrett and S.~Robertson,
\newblock Counterexamples to the $p(t)$-adic Littlewood conjecture over small
finite fields,
\newblock \emph{Mathematics of Computation} \textbf{95} (2026), no.~360,
1961--1986,
\newblock \href{https://arxiv.org/abs/2405.14454}{arXiv:2405.14454}.

\bibitem{LaiSprang2026}
L.~Lai and J.~Sprang,
\newblock On the $P(t)$-adic Littlewood conjecture in odd characteristics,
\newblock preprint, 2026,
\newblock \href{https://arxiv.org/abs/2606.00633}{arXiv:2606.00633}.

\bibitem{deMathanTeulie2004}
B.~de Mathan and O.~Teuli\'e,
\newblock Probl\`emes diophantiens simultan\'es,
\newblock \emph{Monatshefte f\"ur Mathematik} \textbf{143} (2004), 229--245,
\newblock \href{https://doi.org/10.1007/s00605-003-0199-y}
{doi:10.1007/s00605-003-0199-y}.

\bibitem{Robertson2026}
S.~Robertson,
\newblock Combinatorics on number walls and the $P(t)$-adic Littlewood
conjecture,
\newblock \emph{Mathematika} \textbf{72} (2026), no.~1, e70064,
\newblock \href{https://doi.org/10.1112/mtk.70064}
{doi:10.1112/mtk.70064}.

\end{thebibliography}
\end{document}